\title{A strong law of large numbers for scrambled net integration}
\author{Art B. Owen\\Stanford University
\and
Daniel Rudolf\\
University of Goettingen
}
\date{June 2020}
\renewcommand{\le}{\leqslant}
\renewcommand{\ge}{\geqslant}
\newcommand{\bsa}{\boldsymbol{a}}
\newcommand{\bsc}{\boldsymbol{c}}
\newcommand{\bsk}{\boldsymbol{k}}
\newcommand{\bsu}{\boldsymbol{u}}
\newcommand{\bsx}{\boldsymbol{x}}
\newcommand{\bszero}{\boldsymbol{0}}
\newcommand{\dustd}{\mathbb{U}}
\newcommand{\hk}{\mathrm{HK}}
\newcommand{\mc}{\mathrm{MC}}
\newcommand{\qmc}{\mathrm{QMC}}
\newcommand{\rqmc}{\mathrm{RQMC}}
\newcommand{\rlat}{\mathrm{RLAT}}
\newcommand{\rd}{\,\mathrm{d}}
\newcommand{\one}{\mathbf{1}}
\newcommand{\bvhk}{\mathrm{BVHK}}
\newcommand{\e}{\mathbb{E}}
\newcommand{\var}{\mathrm{var}}
\newcommand{\cov}{\mathrm{cov}}
\newcommand{\real}{\mathbb{R}}
\newcommand{\ce}{\mathcal{E}}
\newcommand{\cf}{\mathcal{F}}
\newcommand{\cn}{\mathcal{N}}
\newcommand{\ct}{\mathcal{T}}
\newtheorem{theorem}{Theorem}
\newtheorem{lemma}{Lemma}
\newtheorem{corollary}{Corollary}
\theoremstyle{definition}
\newtheorem{definition}{Definition}
\begin{document}
\maketitle

\begin{abstract}
This article provides a strong law of large numbers for integration
on digital nets randomized by a nested uniform scramble.
The motivating problem is optimization over some variables
of an integral over others, arising in Bayesian optimization.
This strong law requires that the integrand
have a finite moment of order $p$ for some $p>1$.
Previously known results implied a strong law
only for Riemann integrable functions.
Previous general weak laws of large numbers for
scrambled nets require a square integrable integrand.
We generalize from $L^2$ to $L^p$ for $p>1$ via
the Riesz-Thorin interpolation theorem.
\end{abstract}

\section{Introduction}

Numerical integration is a fundamental building block in many
applied mathematics problems.  When the integrand is a smooth
function of a low dimensional input, then classical methods
such as tensor products of Simpson's rule are very
effective \cite{davrab}.   For non-smooth integrands
or higher dimensional domains, these methods may perform poorly.
One then turns to Monte Carlo methods, where the integrand
is expressed as the expected value of a random variable
which is then sampled in a simulation and averaged.
Sample averages converge to population averages by
the law of large numbers (LLN), providing a justification
for the Monte Carlo method.

The Monte Carlo method converges very slowly to the
true answer as the number $n$ of sampled values increases.
The root mean squared error is $O(n^{-1/2})$.
Quasi-Monte Carlo (QMC) methods
\cite{dick:kuo:sloa:2013, dick:pill:2010, nied92}
replace random sampling by deterministic sampling methods. These may
be heuristically described as space filling samplers
using $n$ points  constructed to reduce the unwanted
gaps and clusters that would arise among randomly chosen inputs.
Because the inputs are not random, we cannot use the law
of large numbers to ensure that the estimate converges
to the integral as $n\to\infty$.
Such consistency is a minimal requirement of an integration
method.  For  QMC, consistency requires additional assumptions
of Riemann integrability or bounded variation, whose
descriptions we defer. Under the latter condition, the integration
error is $O(n^{-1+\epsilon})$ for any $\epsilon>0$.
QMC has proved valuable in financial valuation \cite{glasserman2003monte},
graphical rendering \cite{keller95} and solving PDEs in random environments
\cite{kuo:nuye:2016}.

In addition to knowing that a method would work
as $n\to\infty$, users also need to have some estimate
of how well it has worked for a given sample size $n$.
Monte Carlo methods make it easy to quantify
uncertainty by using the central limit theorem
in conjunction with a sample variance estimate.
Plain QMC lacks such a convenient error estimate.
Randomized QMC (RQMC) methods,
surveyed in \cite{lecu:lemi:2002},
produce random points with QMC properties.
Then a few statistically independent repeats  of the
whole RQMC process support uncertainty quantification.
One of these methods, scrambled nets \cite{rtms,owensinum},
provides estimated integrals that are consistent
as $n\to\infty$ under weaker conditions than plain QMC
requires.
It can also reduce the root mean squared error to $O(n^{-3/2+\epsilon})$
\cite{smoovar,localanti} under further conditions on the integrand.

The first panel in Figure~\ref{fig:sobol} shows $512$ MC points in the
unit square $[0,1]^2$. We see clear gaps and clumps among those points.
The second panel shows $512$ QMC points
from a Sobol' sequence described in Section~\ref{sec:snets}.
The points are very structured and fill the space quite evenly.
The third panel shows a scrambled version of those $512$
points also described in Section~\ref{sec:snets}.

\begin{figure}
\includegraphics[width=1.0\hsize]{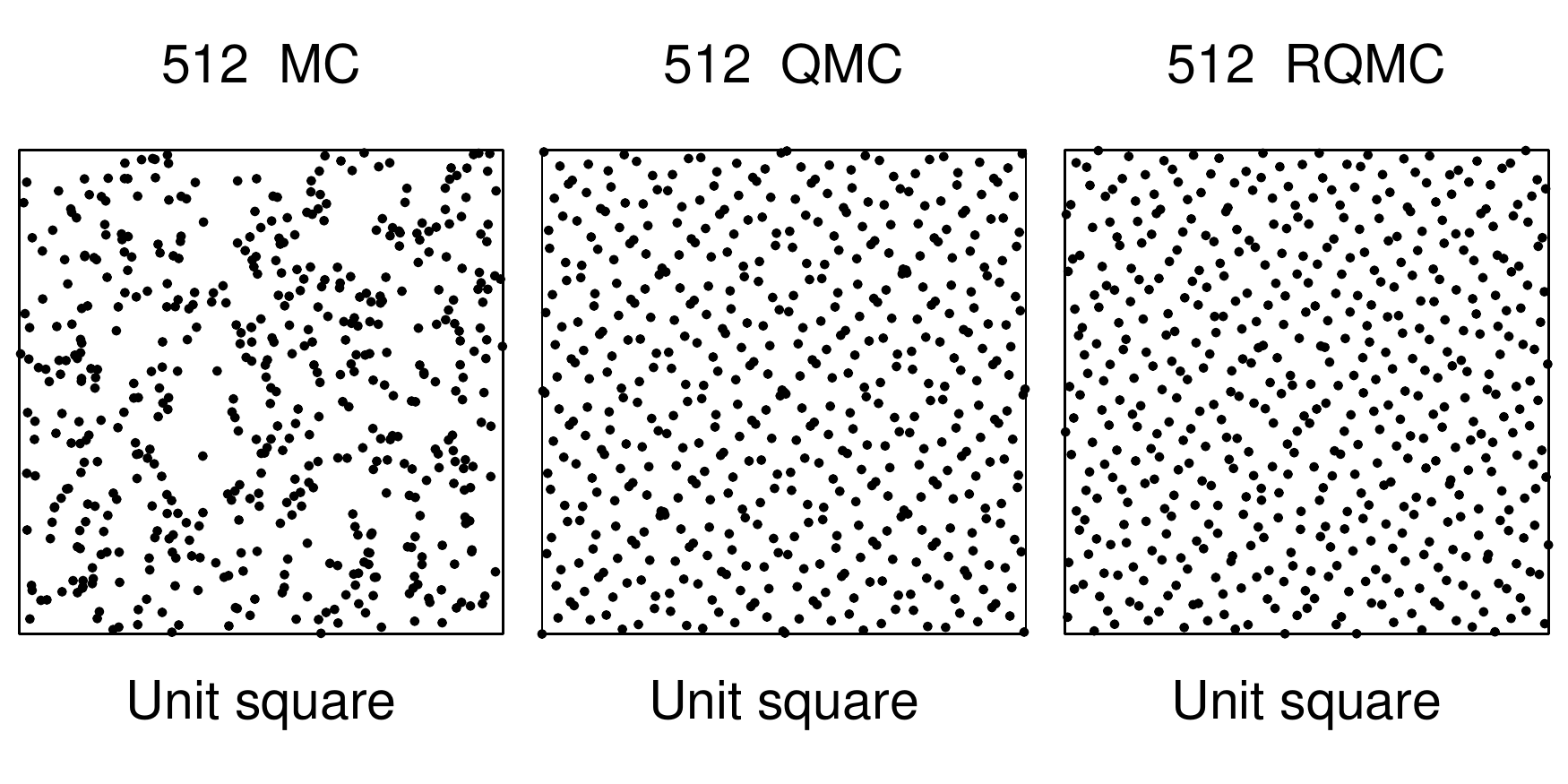}
\caption{\label{fig:sobol}
Each panel depicts $512$ points in the unit square $[0,1]^2$.
From left to right: plain Monte Carlo points, Sobol' points, scrambled
Sobol' points.
}
\end{figure}

Up to this point, we have considered the LLN
as just one result.  There are in fact strong and weak forms of
the LLN that we discuss below. The distinction does
not come up for plain Monte Carlo sampling
because both laws hold at once.
For RQMC, mostly weak laws of large numbers have been proved.
Our contribution here is to establish strong laws.
The motivation to do this comes from
the PyTorch \cite{bala:etal:2019}
tool for Bayesian optimization. A prototypical Bayesian optimization
problem is to find $\min_{\theta\in\Theta}\int_{\real^d}g(\theta,\bsx)\rd\bsx$
for some function $g(\theta,\bsx)$ and a set $\Theta$ of
allowed values for a parameter $\theta$.
In a simulation-optimization framework \cite{andr:1998}   the integral over $\bsx$
values may be approximated by a Monte Carlo average.
Integration is then a building block in a larger problem.
PyTorch has a version using RQMC points instead of MC.
Consistent estimation of the optimal $\theta$ could be proved
assuming a strong LLN for some sample values $\bsx_1,\dots,\bsx_n$.
Such a strong law was available for plain Monte Carlo but
not for RQMC, yet RQMC shows much better empirical results
in \cite{bala:etal:2019}.

An outline of this paper is as follows.
Section~\ref{sec:background}
presents the strong and weak laws of large numbers
referred to above as well as  MC and QMC and RQMC
sampling, making more precise some of the
conditions stated in this introduction.
It includes a lemma to show that functions
of bounded variation in the sense of Hardy and Krause
(the usual regularity assumption in QMC)
must also be Riemann integrable. That is either a new
result or one hard to find in the literature.
Section~\ref{sec:snets} defines the QMC
method known as digital nets whose
RQMC counterparts are called scrambled nets.
Section~\ref{sec:thellns} has the main result.
It is a strong law of large numbers for scrambled
net sampling. The integrand is assumed to
be square integrable.
The first new strong law is a form of consistency for scrambled net integration
as $n\to\infty$ through the set of values that can be written as $n=rb^m$ for
$r=1,\dots,R$ using some integers $m\ge0$,  $b\ge2$ and $R\ge1$.
While those are the best sample sizes to use for reasons given in that section,
we next extend the result to the ordinary limit as $n\to\infty$
through all integer values.
Section~\ref{sec:rieszthorin} replaces the assumption that $f^2$
be integrable by one that  $|f|^p$ have a finite integral
for some $p>1$. This result uses the
Riesz-Thorin interpolation theorem \cite{benn:shar:1988}.
Section~\ref{sec:discussion} provides some additional context and discussion,
including randomly shifted lattice versions of RQMC.

\section{Background on LLNs, QMC and RQMC}\label{sec:background}

We begin with the unit cube $[0,1]^d$ in dimension $d\ge1$.
For $p\ge1$, the space $L^p[0,1]^d$ consists of all measurable functions $f$ on $[0,1]^d$
for which $\Vert f\Vert_p = \bigl(\int_{[0,1]^d}|f(\bsx)|^p\rd\bsx\bigr)^{1/p} <\infty$.
We consider the problem of computing an estimate $\hat\mu$ of the integral
$\mu = \int_{[0,1]^d}f(\bsx)\rd\bsx$.
Here $\mu$ is the expected value of $f(\bsx)$
when $\bsx$ has the uniform distribution on $[0,1]^d$.
We write $\mu=\e(f(\bsx))$ for $\bsx\sim\dustd[0,1]^d$
and we use $\Pr(A)$ below to denote the probability of the event $A$.
Many problems that do not originate as integrals over $[0,1]^d$
have such a representation using transformations to generate non-uniformly distributed
random variables over the cube and other spaces
\cite{devr:1986}. We suppose that those transformations are subsumed into~$f$.
Also, while our theory works for genuinely random numbers, in practice one
ordinarily uses deterministic output of a random number generator that simulates
randomness.

The plain Monte Carlo (MC) method takes independent
$\bsx_i\sim\dustd[0,1]^d$ and estimates $\mu$ by
$
\hat\mu_n =\hat\mu_n^\mc = (1/n)\sum_{i=1}^nf(\bsx_i).
$
There are many more sophisticated Monte Carlo methods but when
we refer to Monte Carlo below we mean this simple one.

The weak law of large numbers (WLLN) implies that for any $\epsilon>0$,
\begin{align}\label{eq:mcwlln}
\lim_{n\to\infty} \Pr\bigl( |\hat\mu_n^\mc-\mu|>\epsilon\bigr)=0.
\end{align}
The strong law of large numbers (SLLN) implies that
\begin{align}\label{eq:mcslln}
\Pr\Bigl( \lim_{n\to\infty}\hat\mu_n^\mc=\mu\Bigr)=1
\end{align}
which we may write as
$\Pr( \lim\sup_{n\to\infty}|\hat\mu_n^\mc-\mu|>\epsilon)=0$
to parallel the WLLN.
Both the WLLN and SLLN hold for independent and identically distributed (IID)
random variables $f(\bsx_i)$ when $f\in L^1[0,1]^d$.
For proofs of these laws, see \cite[Chapter 2]{durr:2019}.
For an example of a sequence of independent random variables
that satisfies the WLLN but not the SLLN, let
$\hat\mu_n = \mu$ with probability $1-1/n$ and
$\hat\mu_n = \mu+1$ otherwise.

In QMC sampling, the $\bsx_i$ are constructed so that the discrete
distribution placing probability $1/n$
on each of $\bsx_1,\dots,\bsx_n$ (with repeated points counted multiple times)
is close to the continuous uniform distribution on $[0,1]^d$.
There are various ways, called discrepancies
\cite{chen:sriv:trav:2014}, to quantify the
distance between these discrete and continuous measures.
For a set $S\subset [0,1]^d$ define $\one\{\bsx\in S\}$
to be $1$ if $\bsx\in S$ and $0$ otherwise.
The most widely used discrepancy is the star discrepancy
$$
D^*_n  = D^*_n(\bsx_1,\dots,\bsx_n) = \sup_{\bsa\in[0,1]^d}
\biggl|
\frac1n\sum_{i=1}^n\one\{\bsx_i\in [\bszero,\bsa)\}
-\prod_{j=1}^da_j
\biggr|
$$
where $[\bszero,\bsa) =\{\bsx\in[0,1]^d\mid 0\le x_j < a_j,\, j=1,\dots,d\}$.

To keep this paper at a manageable length,
the relevant properties of QMC and RQMC methods are presented
but the details of their constructions are omitted.
For the latter, see \cite{dick:kuo:sloa:2013,dick:pill:2010,nied92,lecu:lemi:2002}
among others.

Because QMC is deterministic,
it has no analogue of the WLLN~\eqref{eq:mcwlln}.
There is an analogue of the SLLN~\eqref{eq:mcslln},
as follows.
Let $\hat\mu_n^\qmc = (1/n)\sum_{i=1}^nf(\bsx_i)$
where now the points $\bsx_i$ have been chosen to have
small discrepancy.
If $f$ is Riemann integrable and $D_n^*\to0$ then
\cite[p. 3]{kuip:nied:1974}
\begin{align}\label{eq:slln4qmc}
\lim_{n\to\infty}\hat\mu_n^\qmc = \mu
\end{align}
providing the QMC version of the SLLN~\eqref{eq:mcslln}.
There is a converse, where if $|\hat\mu_n-\mu|\to0$ whenever
$D_n^*\to0$, then $f$ must be Riemann integrable. See the references
and discussion in \cite{nied:1977}.  That is, QMC could fail
to be consistent when $f$ is not Riemann integrable.
Riemann integrable $f$ must also be bounded.

A better known result about QMC is the Koksma-Hlawka inequality
below which uses the notion of bounded variation.  Recall that
a differentiable function $f$ on $[0,1]$ has total variation
$V(f) = \int_0^1|f'(x)|\rd x$ and it is of bounded variation
for $V(f)<\infty$.
There are numerous generalizations of the total
variation for functions on the unit cube $[0,1]^d$ when  $d>1$ (see \cite{clar:adam:1933}).
Of those, the total variation
in the sense of Hardy and
Krause \cite{hard:1906,krau:1903:a},
denoted by $V_\hk(f)$, is the most useful one for QMC.
If $V_\hk(f)<\infty$, then we write $f\in \bvhk[0,1]^d$.
Although we don't need $f$ to have bounded variation to get the SLLN~\eqref{eq:slln4qmc}
for QMC, bounded variation gives us some information
on the rate of convergence, via the Koksma-Hlawka inequality
\begin{align}\label{eq:khineq}
|\hat\mu_n^\qmc-\mu|\le D_n^*(\bsx_1,\dots,\bsx_n)V_\hk(f)
\end{align}
(see \cite{hick:2014}).
Typical QMC constructions provide
infinite sequences $\bsx_i$ whose initial subsequences satisfy
$$
D_n^*(\bsx_1,\dots,\bsx_n) = O\Bigl( \frac{\log(n)^d}{n}\Bigr).
$$
Then $|\hat\mu^\qmc_n-\mu| = O(n^{-1+\epsilon})$
by~\eqref{eq:khineq} for any $\epsilon>0$.

The counterpart in MC to the Koksma-Hlawka inequality is that
\begin{align}\label{eq:rootn}
\e( (\hat\mu_n^\mc-\mu)^2)^{1/2} = n^{-1/2}\sigma(f)
\end{align}
when, for $\bsx\sim\dustd[0,1]^d$
the variance of $f(\bsx)$ is
 $\sigma^2=\sigma^2(f)=\e( (f(\bsx)-\mu)^2)<\infty$.
Where the rate for QMC comes after strengthening the regularity
requirement on $f$ from Riemann integrability to bounded variation, the rate for
MC comes about after strengthening the requirement from $f\in L^1[0,1]^d$ to $f\in L^2[0,1]^d$.
The MC counterpart~\eqref{eq:rootn}
 is exact while the QMC version~\eqref{eq:khineq} is an
extremely conservative upper bound, in that it covers even the
worst $f\in\bvhk[0,1]^d$ for any given $\bsx_1,\dots,\bsx_n$.

A Riemann integrable function is not necessarily
in BVHK.  For instance $f(\bsx) = \one\{\sum_{j=1}^dx_j \le 1\}$
is Riemann integrable but, for $d\ge2$, it is not in BVHK \cite{variation}.
A function in BVHK is necessarily Riemann integrable. This result
is hard to find in the literature.
It must almost certainly have been known to Hardy, Krause, Hobson and others
over a century ago, at least for $d=2$, which earlier work emphasized.
Here is a short proof based on some recent results.

\begin{lemma}
If $f\in\bvhk[0,1]^d$, then $f$ is also Riemann integrable.
\end{lemma}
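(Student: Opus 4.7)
The plan is to apply Lebesgue's criterion: a bounded function on $[0,1]^d$ is Riemann integrable if and only if its set of discontinuities has Lebesgue measure zero.

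First, I would check that every $f\in\bvhk[0,1]^d$ is bounded. Moving from a reference corner $\boldsymbol{1}=(1,\dots,1)$ to an arbitrary $\bsx\in[0,1]^d$ along $d$ successive axis-parallel segments and summing the one-dimensional total variations along each segment gives $|f(\bsx)-f(\boldsymbol{1})|\le V_\hk(f)$, so $\|f\|_\infty\le|f(\boldsymbol{1})|+V_\hk(f)<\infty$.

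The main task is to show the discontinuity set has Lebesgue measure zero. My approach would be to exploit the correspondence (in the spirit of Aistleitner--Dick) between BVHK functions and finite signed Borel measures on the faces of $[0,1]^d$: any $f\in\bvhk[0,1]^d$ admits a representation
\[
f(\bsx)=\sum_{u\subseteq\{1,\dots,d\}}\nu_u\Bigl(\prod_{j\in u}[0,x_j]\Bigr),
\]
where each $\nu_u$ is a finite signed Borel measure on $[0,1]^{|u|}$. A finite signed measure has only countably many atoms. If $\boldsymbol{y}$ is an atom of $\nu_u$, then $\bsx\mapsto\nu_u(\prod_{j\in u}[0,x_j])$ is discontinuous at $\bsx$ only when $x_j=y_j$ for some $j\in u$, confining the discontinuities contributed by this atom to a finite union of coordinate hyperplanes in $[0,1]^d$. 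The countable union over all atoms and all subsets $u$ is still a Lebesgue null set, while off this exceptional set every summand is continuous because the non-atomic part of a finite signed measure depends continuously on the upper endpoint of integration.

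The hard part, and where I would expect to spend most of the effort, is the atom-to-discontinuity bookkeeping: pinning down exactly how atoms of each $\nu_u$ translate into hyperplanes inside $[0,1]^d$, and confirming that the non-atomic parts contribute continuous summands. A more elementary alternative would be a Jordan-type decomposition $f=g-h$ into completely monotone pieces, followed by an induction on $d$ showing that completely monotone functions are Riemann integrable (using that one-dimensional monotone functions have only countably many discontinuities, with Fubini-slicing in the inductive step).
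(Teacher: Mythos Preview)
Your primary route via Lebesgue's criterion and the signed-measure representation has a genuine gap at precisely the step you flagged as hard. The claim that ``the non-atomic part of a finite signed measure depends continuously on the upper endpoint of integration'' is false once $|u|\ge 2$. For a counterexample, let $\nu_u$ be one-dimensional Lebesgue measure on the segment $\{1/2\}\times[0,1]\subset[0,1]^2$. This measure has no atoms (no single point carries mass), yet $(x_1,x_2)\mapsto\nu_u([0,x_1]\times[0,x_2])$ jumps across the line $x_1=1/2$. The repair is to replace point-atoms by \emph{marginal} atoms: for each $j\in u$ there are only countably many values $y$ with $|\nu_u|(\{\bsx:x_j=y\})>0$, and the cumulative function of $\nu_u$ is continuous off the resulting countable union of coordinate hyperplanes. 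With that correction the Lebesgue-criterion argument does go through, but the atom/non-atom split as you stated it does not.

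Your alternative---decompose $f$ into completely monotone pieces and then show those are Riemann integrable---is exactly the paper's route. The paper invokes the Aistleitner--Dick decomposition $f=f(\bszero)+f_+-f_-$ with $f_\pm$ completely monotone, observes that completely monotone functions are in particular coordinatewise monotone (and bounded), and then cites a result of Lavri\v{c} that bounded monotone functions on $[0,1]^d$ are Riemann integrable, rather than carrying out the induction on $d$ that you sketch. So your fallback plan already matches the paper; your main plan is a different and workable approach, but only after the fix above.
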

\begin{proof}
If $f\in\bvhk[0,1]^d$ then
$f(\bsx) = f(\bszero) +f_+(\bsx)-f_-(\bsx)$
where $f_\pm$ are uniquely determined completely monotone
functions on $[0,1]^d$ with $f_\pm(\bszero)=0$ \cite[Theorem 2]{aist:dick:2015}.
Completely monotone functions are, {\sl a fortiori}, monotone.
Now both $f_\pm$ are bounded monotone functions
on $[0,1]^d$.
They are then  Riemann integrable
by the corollary in \cite{lavr:1993}.
\end{proof}

While QMC has a superior convergence rate to MC for $f\in\bvhk$,
MC has an advantage over QMC in that $\e( (\hat\mu^\mc-\mu)^2)=\sigma^2/n$
is simple to estimate from independent replicates, while
$D_n^*$ is very expensive to compute \cite{doer:gnew:wals:2014}
and $V_\hk(f)$ is much harder to estimate than $\mu$.
In a setting where attaining accuracy is important, it must  also
be important to estimate the attained accuracy.
RQMC methods, described next, are hybrids of MC and QMC that
support error estimation.

In RQMC \cite{lecu:lemi:2002,rtms} one starts with points $\bsa_1,\dots,\bsa_n\in[0,1]^d$
having a small star discrepancy and randomizes them to produce
points $\bsx_1,\dots,\bsx_n$. These points satisfy the following
two conditions: individually $\bsx_i\sim\dustd[0,1]^d$, and collectively,
$\bsx_1,\dots,\bsx_n$ have small star discrepancy.
The RQMC estimate of $\mu$  is $\hat\mu_n^\rqmc = (1/n)\sum_{i=1}^nf(\bsx_i)$.
From the uniformity of the points $\bsx_i$ we find that
$\e( \hat\mu_n^\rqmc)=\mu$.
Their small star discrepancy means that they are also QMC points
and so they inherit the accuracy properties of QMC.
To estimate the error, one takes several independent randomizations of $\bsa_i$
producing independent replicates of $\hat\mu_n^\rqmc$ whose sample
variance can be computed.


\section{Scrambled nets and sequences}\label{sec:snets}

In this section, we describe digital nets and sequences
and  scrambled versions of them.  Many authors reserve the term `digital'
to only mean points obtained from some certain specific
classes of algorithms.
Since the overwhelming majority of nets and sequences in use
are constructed with such algorithms, we lose little by this simplification.

Let $b\ge2$ be an integer base.
Let $\bsk = (k_1,\dots,k_d)$ for integers $k_j\ge0$
and $\bsc = (c_1,\dots,c_d)$ where $c_j\in\{0,1,\dots,b^{k_j}-1\}$.
Then the set
\begin{align}\label{eq:defelint}
E(\bsk,\bsc) =\prod_{j=1}^d \Bigl[ \frac{c_j}{b^{k_j}_j}, \frac{c_j+1}{b^{k_j}_j}\Bigr)
\end{align}
is called an elementary interval in base $b$.
It has volume $b^{-|\bsk|}$ where $|\bsk| =\sum_{j=1}^dk_j$.

\begin{definition}
For integers $m\ge t\ge0$, $b\ge2$ and $d\ge1$,
the points $\bsx_1,\dots,\bsx_n\in[0,1)^d$ for $n=b^m$
are a $(t,m,d)$-net in base $b$ if
$$
\sum_{i=1}^n\one\{\bsx_i\in E(\bsk,\bsc)\} = b^{m-|\bsk|}
$$
holds for every elementary interval $E(\bsk,\bsc)$ from~\eqref{eq:defelint}
with $|\bsk|\le m-t$.
\end{definition}

An elementary interval of volume $b^{-|\bsk|}$
should ideally contain $nb^{-|\bsk|}=b^{m-|\bsk|}$ points
from $\bsx_1,\dots,\bsx_n$.
In a $(t,m,d)$-net in base $b$, every elementary interval that should ideally
contain $b^t$ of the points does so.  For any given
$b$, $m$ and $d$, smaller $t$
imply finer equidistribution.  It is not always possible
to attain $t=0$.

\begin{definition}
For integers $t\ge0$, $b\ge2$ and $d\ge1$,
the points $\bsx_i\in[0,1)^d$ for $i\ge1$
are a $(t,d)$-sequence in base $b$ if
every subsequence of the form
$\bsx_{(r-1)b^m+1},\dots,\bsx_{rb^m}$ for
integers $m\ge t$ and $r\ge1$
is a $(t,m,d)$-net in base~$b$.
\end{definition}

The best available values of $t$ for nets and
sequences are recorded in the online resource MinT described
in~\cite{schu:schm:2009}, which also includes lower bounds.
The Sobol' sequences of \cite{sobol67}
are $(t,d)$-sequences in base $b=2$.
There are newer  versions of Sobol's sequence
with improved `direction numbers' in
\cite{joe:kuo:2008,sobo:asot:krei:kuch:2011}.
The Faure sequences \cite{faures} have $t=0$
but require that the base be a prime
number $b\ge d$.
Faure's construction was generalized to prime
powers $b\ge d$ in \cite{nied87}.
The best presently attainable values of $t$ for base $b=2$ are
in the Niederreiter-Xing sequences of \cite{niedxing96,niedxing96b}.

Randomizations of digital nets and sequences
operate by applying certain random permutations
to their base $b$ expansions.  For details, see
the survey in \cite{altscram}.
We will consider the `nested uniform' scramble from \cite{rtms}.

If $\bsa_1,\dots,\bsa_n$ is a $(t,m,d)$-net in base $b$
then after applying a nested uniform scramble, the
resulting points $\bsx_1,\dots,\bsx_n$ are a $(t,m,d)$-net
in  base $b$ with probability one \cite{rtms}.
If $\bsa_i$ for $i\ge1$ are a $(t,d)$-sequence in  base $b$
then after applying a nested uniform scramble, the resulting
points $\bsx_i$ for $i\ge1$ are a $(t,d)$-sequence in  base $b$
with probability one \cite{rtms}.
In either case, each resulting point satisfies $\bsx_i\sim\dustd[0,1]^d$.

If $f\in L^2[0,1]^d$ and $\hat\mu^{\rqmc}_n$ is based on
a nested uniform scramble of a $(t,d)$-sequence in base $b$
with sample sizes $n=b^k$ for integers $k\ge 0$,
then $\e( (\hat\mu^\rqmc_n-\mu)^2)=o(1/n)$ as $n\to\infty$.
It is thus asymptotically better than MC for any $f$.
For smooth enough $f$,
$\e( (\hat\mu^\rqmc_n-\mu)^2)=O( n^{-3+\epsilon})$ for any $\epsilon>0$.
See \cite{smoovar,localanti} for sufficient conditions.

The main result that we will use is as follows.
Let $f\in L^2[0,1]^d$ and write
$\sigma^2$ for the variance of $f(\bsx)$
when $\bsx\sim\dustd[0,1]^d$.
Then for a $(t,m,d)$-net in base $b$,
scrambled as in \cite{rtms}, we have
\begin{align}\label{eq:gammabound}
\e( (\hat\mu^\rqmc_n-\mu)^2)  \le \frac{\Gamma\sigma^2}n
\end{align}
for some $\Gamma<\infty$ \cite[Theorem 1]{snxs}.
That is, the RQMC estimate for these scrambled nets cannot
have more than $\Gamma$ times the mean squared
error that an MC estimate has.
The value of $\Gamma$
is found using some conservative upper bounds.  We can use
$\Gamma = b^t[(b+1)/(b-1)]^d$. If $t=0$, then we can take
$\Gamma = [b/(b-1)]^d$, and for $d=1$ we can take $\Gamma=b^t$.
The quantity $\Gamma$ arises as an upper bound on
an infinite set of `gain coefficients' relating the RQMC variance
to the MC variance for parts of a basis expansion of $f$.
The worst case bound $\sigma\sqrt{\Gamma/n}$
for the RQMC root mean squared error
does not contain the factor $\log(n)^d$ that makes
the QMC worst case error so large for large $d$
and $n$ of practical interest.

\section{RQMC laws of large numbers}\label{sec:thellns}

This section outlines some very simple LLNs for RQMC before
going on to prove two SLLN results for scrambled net integration
when $f\in L^2[0,1]^d$.
The first SLLN requires sample sizes to be of the form $rb^m$
for $1\le r\le R$ and $m\ge0$ where $b$ is the base of
those nets.  The second SLLN extends the first one to
include all integer sample sizes.

If $f\in\bvhk[0,1]^d$, then there is an SLLN
for RQMC from the Koksma-Hlawka inequality~\eqref{eq:khineq}
when $\Pr(\lim_{n\to\infty}D_n^*(\bsx_1,\dots,\bsx_n) =0)=1$.
More generally, for Riemann integrable~$f$
we get an SLLN for RQMC as an immediate
consequence of equation~\eqref{eq:slln4qmc}.

\begin{theorem}\label{thm:riemannslln}
Let $f:[0,1]^d\to\real$ be Riemann integrable.
For $i\ge1$, let $\bsx_i\in[0,1]^d$  be RQMC points
with $\Pr(\lim_{n\to\infty}D_n^*(\bsx_1,\dots,\bsx_n) =0)=1$.
Then $$\Pr\Bigl(\lim_{n\to\infty} \hat\mu_n^\rqmc=\mu\Bigr)=1.$$
\end{theorem}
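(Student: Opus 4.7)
The plan is to reduce the stochastic claim to the deterministic QMC consistency statement in equation~\eqref{eq:slln4qmc} by arguing pathwise. Let $\Omega_0$ denote the event $\{\lim_{n\to\infty} D_n^*(\bsx_1,\dots,\bsx_n) = 0\}$, which has probability one by hypothesis. For each sample point $\omega\in\Omega_0$, the realizations $\bsx_1(\omega),\bsx_2(\omega),\dots$ form a fixed deterministic sequence in $[0,1]^d$ whose star discrepancy tends to zero.

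Next I would apply the cited deterministic result~\eqref{eq:slln4qmc}: since $f$ is Riemann integrable and $D_n^*(\bsx_1(\omega),\dots,\bsx_n(\omega))\to0$, we have $\hat\mu_n^\qmc(\omega) = (1/n)\sum_{i=1}^n f(\bsx_i(\omega)) \to \mu$. Because $\hat\mu_n^\rqmc$ is defined by the same formula $(1/n)\sum_{i=1}^n f(\bsx_i)$, this says precisely $\hat\mu_n^\rqmc(\omega)\to\mu$ for every $\omega\in\Omega_0$. Therefore $\{\lim_{n\to\infty}\hat\mu_n^\rqmc=\mu\}\supseteq \Omega_0$, and monotonicity of probability gives the conclusion. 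The limit event is measurable as a $\limsup$--$\liminf$ combination of the measurable random variables $\hat\mu_n^\rqmc$, so no additional care with the underlying $\sigma$-algebra is required.

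The main ``obstacle,'' if one can call it that, is really just recognizing that nothing stochastic needs to be proved: the randomness of the RQMC construction enters only through the assumed a.s.\ convergence of $D_n^*$, and the hard work of deducing integral convergence from discrepancy convergence has already been done in the deterministic theory cited as~\eqref{eq:slln4qmc}. Consequently the proof should be only a few lines long and amounts to noting that a property that holds on every sample path in a probability-one set holds almost surely.
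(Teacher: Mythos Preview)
Your proposal is correct and follows exactly the same approach as the paper: both argue that the event $\{\lim_{n\to\infty}\hat\mu_n^\rqmc=\mu\}$ contains the probability-one event $\{\lim_{n\to\infty}D_n^*=0\}$ by applying the deterministic consistency result~\eqref{eq:slln4qmc} pathwise. The paper compresses this into a single displayed inequality, while you spell out the pathwise reasoning and the measurability check, but the argument is identical.
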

\begin{proof}
From equation~\eqref{eq:slln4qmc},
$$\Pr\Bigl( \lim_{n\to\infty}\hat\mu^\rqmc_n = \mu\Bigr)
\ge \Pr\Bigl( \lim_{n\to\infty} D_n^*(\bsx_1,\dots,\bsx_n)=0\Bigr)=1.\qedhere$$
\end{proof}

Theorem~\ref{thm:riemannslln}  is not strong enough
for some important applications.
It does not cover integration problems
where the integrand $f$ is not in $\bvhk[0,1]^d$
including many where $f$ is not even Riemann integrable.
Integrands with jump discontinuities
or kinks (jumps in their gradient)
\cite{grie:kuo:sloa:2013,grie:kuo:sloa:2017,grie:kuo:leov:sloa:2018,he2015convergence}
commonly fail to be in BVHK
and integrands containing singularities
\cite{basu:owen:2018,hart:kain:2006,pointsingularity, sobo:1973}
are not even Riemann integrable.

Sobol' \cite{sobo:1973} noticed that some of his colleagues were using his QMC
points with apparent success on problems with integrable singularities and
then he initiated a theory in which QMC could be consistent provided the points $\bsx_i$
avoided the singularities in a suitable and problem specific way. Uniform random
points show no preference for the region near a singularity
no matter where it is
and this is enough to get consistent integral estimates
on some problems with integrable singularities
\cite{basu:owen:2018,haltavoid,pointsingularity}.

In those cases, we can easily get a WLLN, if the integrand is in $L^2$.
The usual results for RQMC
show that $\e( (\hat\mu_n^\rqmc-\mu)^2)\to0$
as $n\to\infty$ for $f\in L^2[0,1]^d$.
From that a WLLN follows by Chebychev's inequality.
A WLLN proves to be not quite enough for some
problems, so we seek an SLLN for scrambled net quadrature.

First we prove a strong law of large numbers
for sample sizes equal to $rb^m$ for $1\le r\le R$ and $b\ge2$
and $f\in L^2[0,1]^d$.
These are the best sample sizes to use in a $(t,d)$-net
with values $n=b^m$ being the best of those
because they are the smallest sample sizes to properly
balance elementary intervals of size $b^{t-m}$.

Sobol' \cite{sobo:1998} recommends using
sample sizes in a geometric progression such as $n_\ell=2^\ell$, not an arithmetic
one and there is a lengthier discussion of this point in \cite{owen2016constraint}.
To see informally how this works, suppose that
$|\hat\mu_n-\mu|\le An^{-1-\delta}$ for $\delta>1$ and $A>0$
while $|\hat\mu_n-\hat\mu_{n+1}|\ge B/n$.
The first is an instance of better than $1/n$
error and the second will be common because
$\hat\mu_{n+1} = \hat\mu_n(n/(n+1)) +f(\bsx_{n+1})/(n+1)$.
Then
$|\hat\mu_{n+1}-\mu|\ge
|\hat\mu_{n+1}-\hat\mu_n|-|\hat\mu_{n}-\mu|
$
and so for large $n$,  $\hat\mu_{n+1}$ will commonly
be worse than $\hat\mu_n$.  A rate like $n^{-1-\delta}$
can only be attained on geometrically spaced sample sizes $n$
under conditions in \cite{owen2016constraint}.

 \begin{theorem}\label{thm:slln4rqmc}
 Let $\bsx_1,\bsx_2,\dots$ be a $(t,d)$-sequence
 in base $b$, with gain coefficients no larger than $\Gamma<\infty$
 and randomized as in \cite{rtms}.
 Let $f\in L^2[0,1]^d$ with $\int_{[0,1]^d}f(\bsx)\rd\bsx=\mu$.
For an integer $R\ge1$, let $\cn=\{ rb^m\mid 1\le r\le R, m\ge 0\}$.
Then
$$
\Pr\Bigl( \lim_{\ell\to\infty} \hat\mu_{n_\ell}^\rqmc = \mu\Bigr) =1
$$
where $n_\ell$ for $\ell\ge1$ are the unique elements of $\cn$ arranged
in increasing order.
\end{theorem}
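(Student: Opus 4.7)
The plan is to reduce a strong law along $\cn$ to a summable mean squared error estimate along $\cn$, which then yields almost sure convergence by Chebyshev plus Borel--Cantelli. The only real work is controlling $\e((\hat\mu_n^\rqmc-\mu)^2)$ when $n=rb^m$ is not itself a power of $b$, since the bound~\eqref{eq:gammabound} is stated for $(t,m,d)$-nets.

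To handle that, I would use the defining property of a $(t,d)$-sequence: for each $m\ge0$ and $1\le s\le r$ the block $\bsx_{(s-1)b^m+1},\dots,\bsx_{sb^m}$ is a $(t,m,d)$-net in base $b$, and after nested uniform scrambling it remains a scrambled $(t,m,d)$-net with the same gain coefficient bound. Writing
\begin{align*}
\hat\mu_n^\rqmc-\mu=\frac1r\sum_{s=1}^r\bigl(\hat\mu^{(s)}_{b^m}-\mu\bigr),
\end{align*}
where $\hat\mu^{(s)}_{b^m}$ denotes the scrambled net estimate from the $s$-th block, and applying Cauchy--Schwarz to the sum, followed by~\eqref{eq:gammabound} on each term, gives
\begin{align*}
\e\bigl((\hat\mu_n^\rqmc-\mu)^2\bigr)\le\frac1r\sum_{s=1}^r\e\bigl((\hat\mu^{(s)}_{b^m}-\mu)^2\bigr)\le\frac{\Gamma\sigma^2}{b^m}=\frac{r\Gamma\sigma^2}n\le\frac{R\Gamma\sigma^2}n.
\end{align*}
I would not try to exploit any independence across the $r$ blocks: Cauchy--Schwarz is crude enough to work without any such assumption and only costs a factor of $r\le R$, which is absorbed into the constant.

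With that uniform bound, Chebyshev's inequality yields $\Pr(|\hat\mu_n^\rqmc-\mu|>\epsilon)\le R\Gamma\sigma^2/(\epsilon^2 n)$ for every $n\in\cn$. Summing over the distinct values of $\cn$ gives
\begin{align*}
\sum_{\ell\ge1}\Pr\bigl(|\hat\mu_{n_\ell}^\rqmc-\mu|>\epsilon\bigr)\le\frac{R\Gamma\sigma^2}{\epsilon^2}\sum_{n\in\cn}\frac1n\le\frac{R\Gamma\sigma^2}{\epsilon^2}\Bigl(\sum_{r=1}^R\frac1r\Bigr)\Bigl(\sum_{m=0}^\infty\frac1{b^m}\Bigr)<\infty,
\end{align*}
since $b\ge2$. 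The Borel--Cantelli lemma then gives $\Pr(\limsup_\ell|\hat\mu_{n_\ell}^\rqmc-\mu|>\epsilon)=0$, and intersecting over a countable sequence $\epsilon\downarrow0$ delivers $\Pr(\lim_\ell\hat\mu_{n_\ell}^\rqmc=\mu)=1$, as desired.

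The main obstacle is really just the first step, namely getting a bound of order $1/n$ for MSE along all of $\cn$ rather than only on powers of $b$; the Cauchy--Schwarz trick above costs at most a factor $R$, which is harmless because the subsequent series is already summable thanks to the geometric thinning $b^m$ in the denominator. The rest is standard second-moment machinery, and crucially the argument never needs finer probabilistic structure (such as independence or a martingale property) across blocks of the scrambled sequence, which is fortunate because consecutive blocks of a nested uniform scramble are far from independent.
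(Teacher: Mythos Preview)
Your approach is essentially identical to the paper's: decompose $\hat\mu_{rb^m}^\rqmc$ as an average over $r$ scrambled-net block estimates, bound the MSE of the average by that of a single block (the paper phrases this as ``assume correlation $1$,'' which is the same Cauchy--Schwarz/Jensen step you use), then apply Chebyshev and Borel--Cantelli over the geometrically thinned index set. One small slip: the $(t,d)$-sequence definition only guarantees that the blocks are $(t,m,d)$-nets when $m\ge t$, not for all $m\ge0$ as you wrote; the paper handles the finitely many indices with $m<t$ by the trivial bound $\Pr(\cdot)\le 1$, contributing at most $tR$ to the Borel--Cantelli sum.
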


\begin{proof}
Pick any $\epsilon>0$.
Let $\sigma^2<\infty$ be the variance of $f(\bsx)$ for $\bsx\sim\dustd[0,1]^d$.
First we consider $n_\ell = rb^m$ for some $m\ge t$ and $1\le r\le R$.
Because $m\ge t$, the definition of a $(t,d)$-sequence
implies that
$$
\hat\mu_{n_\ell}^\rqmc = \frac1r\sum_{j=1}^r\hat\mu_{\ell,j}
$$
where each $\hat\mu_{\ell,j}$ is the average of $f$ over a
scrambled $(t,m,d)$-net in base $b$. We don't know the covariances
$\cov(\hat\mu_{\ell,j}, \hat\mu_{\ell,j'})$ but we can bound
them by assuming conservatively that the corresponding correlations are $1$.
Then
\begin{align*}
\var(\hat\mu_{n_\ell}^\rqmc)
&=\frac1{r^2}\sum_{j=1}^r\sum_{j'=1}^r
\cov(\hat\mu_{\ell,j}, \hat\mu_{\ell,j'})
\le \var(\hat\mu_{\ell,1})\le\frac{\Gamma\sigma^2}{n_\ell/r}.
\end{align*}
Next, by Chebychev's inequality,
$\Pr( |\hat\mu^\rqmc_{n_\ell}-\mu|\ge\epsilon)\le
r\Gamma\sigma^2/(n_\ell\epsilon^2)$.
Now
\begin{align}
\sum_{\ell=1}^\infty\Pr( |\hat\mu^\rqmc_{n_\ell}-\mu|\ge\epsilon)
&\le
\sum_{m=0}^\infty\sum_{r=1}^R
\Pr( |\hat\mu^\rqmc_{rb^m}-\mu|\ge\epsilon)\notag\\
&\le tR + \sum_{m=t}^\infty\sum_{r=1}^R\frac{\Gamma\sigma^2}{b^m\epsilon^2}.
\label{eq:preborelcantelli}
\end{align}
The first inequality arises because some sample sizes $n_\ell$ may
have more than one representation  of the form $rb^m$.
Because the sum~\eqref{eq:preborelcantelli}
is finite,
$$\Pr( |\hat\mu^\rqmc_{n_\ell}-\mu|\ge\epsilon\ \text{for infinitely many $\ell$})=0$$
by the Borel-Cantelli lemma \cite[Chapter 2]{durr:2019}.
Therefore $\Pr\bigl( \lim_{\ell\to\infty}\hat\mu^\rqmc_{n_\ell}=\mu\bigr)=1$.
\end{proof}

Next we extend this SLLN to a limit as $n\to\infty$ without
a restriction to geometrically spaced sample sizes.
While geometrically
spaced sample sizes should be used, it is interesting to verify this limit as well.
The proof method is adapted from the way that
Etemadi \cite{etem:1981}
extends an SLLN for pairwise independent and identically distributed
random variables from geometrically spaced sample sizes to all sample sizes.

\begin{theorem}\label{thm:slln4rqmcall}
 Let $\bsx_1,\bsx_2,\dots$ be a $(t,d)$-sequence
 in base $b$, with gain coefficients no larger than $\Gamma<\infty$
 and randomized as in \cite{rtms}.
Let $f\in L^2[0,1]^d$ with $\int_{[0,1]^d}f(\bsx)\rd\bsx=\mu$.
Then
$$
\Pr\Bigl( \lim_{n\to\infty} \hat\mu_{n}^\rqmc = \mu\Bigr) =1.
$$
\end{theorem}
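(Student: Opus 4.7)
The plan is to adapt Etemadi's device for promoting an SLLN from a geometrically spaced subsequence to all integers. Since the partial sums $\sum_{i=1}^n f(\bsx_i)$ are not monotone in $n$, I first decompose $f = f^+ - f^-$ with $f^\pm = \max(\pm f, 0)\in L^2[0,1]^d$, and write $\mu^\pm = \int_{[0,1]^d} f^\pm(\bsx)\rd\bsx$ and $\hat\mu_n^{(\pm)} = (1/n)\sum_{i=1}^n f^\pm(\bsx_i)$. Since $\hat\mu_n^\rqmc = \hat\mu_n^{(+)} - \hat\mu_n^{(-)}$ and $\mu = \mu^+ - \mu^-$, it suffices to prove that $\hat\mu_n^{(\pm)}\to\mu^\pm$ almost surely for each sign.

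For each integer $k\ge 1$, let $R_k = b^k$ and $\cn_k = \{rb^m\mid 1\le r\le R_k,\,m\ge 0\}$. A short calculation shows that for $M\ge k-1$ the elements of $\cn_k$ in $[b^M, b^{M+1}]$ form the arithmetic progression $\{b^M + s\, b^{M-k+1}\mid 0\le s\le b^{k-1}(b-1)\}$, so consecutive elements of $\cn_k$ past $b^{k-1}$ differ by a factor of at most $1 + b^{1-k} =: 1+\alpha_k$. Applying Theorem~\ref{thm:slln4rqmc} with $R = R_k$ to the $L^2$ integrand $f^+$, and separately to $f^-$, yields events $A_k^\pm$ of probability one on which $\hat\mu_{n_\ell}^{(\pm)}\to\mu^\pm$ as $n_\ell\to\infty$ through $\cn_k$. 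The countable intersection $A = \bigcap_{k\ge 1}(A_k^+\cap A_k^-)$ still has probability one.

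On the event $A$, fix $k$ and any large $n$, and choose consecutive elements $n_\ell\le n\le n_{\ell+1}$ of $\cn_k$ with $n_\ell\ge b^{k-1}$. Nonnegativity of $f^\pm$ gives the sandwich
$$
\frac{n_\ell}{n}\,\hat\mu_{n_\ell}^{(\pm)}
= \frac{1}{n}\sum_{i=1}^{n_\ell} f^\pm(\bsx_i)
\le \hat\mu_n^{(\pm)}
\le \frac{1}{n}\sum_{i=1}^{n_{\ell+1}} f^\pm(\bsx_i)
= \frac{n_{\ell+1}}{n}\,\hat\mu_{n_{\ell+1}}^{(\pm)}.
$$
Combined with $n_{\ell+1}/n_\ell\le 1+\alpha_k$ and $\hat\mu_{n_\ell}^{(\pm)},\hat\mu_{n_{\ell+1}}^{(\pm)}\to\mu^\pm$, this yields $\mu^\pm/(1+\alpha_k) \le \liminf_n \hat\mu_n^{(\pm)}\le \limsup_n \hat\mu_n^{(\pm)} \le (1+\alpha_k)\mu^\pm$ on $A$. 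Letting $k\to\infty$ forces $\hat\mu_n^{(\pm)}\to\mu^\pm$, and hence $\hat\mu_n^\rqmc\to\mu$, almost surely.

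The chief obstacle is the non-monotonicity of the partial sums: without the $f = f^+ - f^-$ split one cannot sandwich $\hat\mu_n^\rqmc$ between its values at $n_\ell$ and $n_{\ell+1}$ at all. After splitting, the price is that the scaling factors $n_\ell/n$ and $n_{\ell+1}/n$ must be driven to one, which forces $R$ to grow with $k$ rather than being held fixed as in Theorem~\ref{thm:slln4rqmc}; verifying the explicit ratio bound $1+b^{1-k}$ for $\cn_k$ is the only genuinely combinatorial step in the argument.
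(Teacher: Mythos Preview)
Your proof is correct and follows essentially the same approach as the paper's: the Etemadi device of splitting $f=f^+-f^-$, applying Theorem~\ref{thm:slln4rqmc} with $R=b^k$, sandwiching via monotone partial sums, and letting $k\to\infty$. The only cosmetic difference is that you establish the ratio bound $n_{\ell+1}/n_\ell\le 1+b^{1-k}$ by exhibiting the arithmetic-progression structure of $\cn_k\cap[b^M,b^{M+1}]$, whereas the paper obtains the equivalent bound $\underline n/n\ge (1+b^{1-k})^{-1}$ directly from the base-$b$ expansion of $n$.
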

\begin{proof}
First we suppose that $f(\bsx)\ge0$. This is no loss of generality because
$f(\bsx) = f_+(\bsx)-f_-(\bsx)$ where $f_+(\bsx)=\max(f(\bsx),0)$
and $f_-(\bsx) = \max(-f(\bsx),0)$. If $f\in L^2[0,1]^d$ then both $f_\pm\in L^2[0,1]^d$
and an SLLN for $f_\pm$ would imply one for $f$.

Because $f(\bsx_i)\ge0$, we know that
$T(n)\equiv\sum_{i=1}^nf(\bsx_i)$ is nondecreasing in $n$.
Choose $R=b^k$ for $k>1$ and let $\cn = \cn(R) = \{ rb^m\mid 1\le r\le R, m\ge0\}$.
For any integer $n\ge1$ define
$\bar n = \bar n(n) =
\min\{ \nu\in\cn\mid \nu\ge n\}$
and
$\underline n =\underline n(n)
= \max\{ \nu\in\cn\mid \nu\le n\}$. Monotonicity of $T(n)$
combined with $\hat\mu_n^\rqmc = T(n)/n$ gives
$$
\frac{\underline n(n)}{n}
\hat\mu_{\underline n}^\rqmc
\le \hat\mu_n^\rqmc
\le\frac{\bar n(n)}n
\hat\mu_{\bar n}^\rqmc.
$$
By Theorem~\ref{thm:slln4rqmc},
$\Pr( \limsup_{n\to\infty} \hat\mu^\rqmc_{\bar n}=\mu)=1$
and
$\Pr( \liminf_{n\to\infty} \hat\mu^\rqmc_{\underline n}=\mu)=1$.
What remains is to bound $\bar n/n$ and $\underline n/n$.

We can suppose that $n>b^k$.
The base $b$ expansion of $n$ is
$\sum_{\ell=0}^La_\ell b^\ell$ where $a_\ell = a_\ell(n)\in\{0,1,\dots,b-1\}$
and $L=L(n) = 1+\lfloor \log_b(n)\rfloor$
is the smallest number of base $b$ digits required to write $n$.
Choosing $m=L-k+1$ we know that
$\underline n\ge \nu=rb^m$ for
$r=\sum_{s=0}^{L-m}a_{m+s}b^s\le b^k= R$.
As a result
$$
\frac{\underline n(n)}{n}
\ge \frac{\sum_{\ell=L-k+1}^La_\ell b^\ell}{\sum_{\ell=0}^La_\ell b^\ell}
\ge \frac{\sum_{\ell=L-k+1}^La_\ell b^\ell}{b^{L-k+1}+\sum_{\ell=L-k+1}^La_\ell b^\ell}
\ge \frac{b^L}{b^{L-k+1}+b^L}.
$$
It follows that
$$\Pr\Bigl( \liminf_{n\to\infty} \hat\mu^\rqmc_{n}\ge(1+b^{1-k})^{-1}\mu\Bigr)=1$$
and since we may choose $k$ as large as we like,
$\Pr( \liminf_{n\to\infty} \hat\mu^\rqmc_{n}\ge\mu)=1$.
Similarly, if $n=A_Lb^L$ then $n\in\cn$ and we may take
$\bar n = n$.  Otherwise,  $\bar n\le\nu+b^m=(r+1)b^m$
with $r+1\le R$ and then
$\Pr( \liminf_{n\to\infty} \hat\mu^\rqmc_{n}\le\mu)=1$.
\end{proof}

\section{An SLLN without square integrability}\label{sec:rieszthorin}

The SLLN for Monte Carlo only requires that $f\in L^1[0,1]^d$.
The results in Section~\ref{sec:thellns} for RQMC require
the much stronger condition that $f\in L^2[0,1]^d$.
In this section, we narrow the gap by proving an SLLN for
scrambled nets when $f\in L^{p}[0,1]^d$
for some $p>1$.

The proof is based on the Riesz-Thorin interpolation theorem
from \cite[Chapter 4]{benn:shar:1988}.
Let $\ce$ be the operator that takes an integrand $f$ and returns
the integration error
$$
\hat\mu^\rqmc_n-\mu = \frac1n\sum_{i=1}^nf(\bsx_i)-\mu.
$$
The integration error is a function of $\bsx_1,\dots,\bsx_n\in[0,1]^d$.
Together these belong to $[0,1]^{dn}$.
Let $\Omega$ be the set $[0,1]^{dn}$ equipped with the distribution
induced by the scrambled net randomization producing
$\bsx_1,\dots,\bsx_n$.
If $n=b^m$, then  $\ce$ is a bounded linear operator from $L^2[0,1]^d$
to $L^2(\Omega)$.
The norm of $\ce$ is
$$
\Vert\ce\Vert_{L^2[0,1]^d\to L^2(\Omega)}
= \sup_{\Vert f\Vert_2\le1}\bigl( \e(\hat\mu^\rqmc_n-\mu)^2\bigr)^{1/2}
\le \sqrt{\Gamma/n}.
$$

The operator $\ce$ is also a bounded linear operator from
$L^1[0,1]^d$ to $L^1(\Omega)$.
Here the norm is
$$
\Vert\ce\Vert_{L^1[0,1]^d\to L^1(\Omega)}
= \sup_{\Vert f\Vert_1\le1}
\e(|\hat\mu^\rqmc_n-\mu|)
\le \sup_{\Vert f\Vert_1\le1}|\mu(f)| + \int_{[0,1]^d}|f(\bsx)|\rd\bsx\le 2.
$$
By the Riesz-Thorin theorem below, $\ce$ is also  a bounded linear
operator from $L^p[0,1]$ to $L^p(\Omega)$ for any $p$
with $1\le p\le 2$.

\begin{theorem}[Riesz-Thorin]
For $1\le q_1\le q_2<\infty$ and $\theta\in[0,1]$, let $p\ge1$ satisfy
$$
\frac1p = \frac{1-\theta}{q_1} +\frac\theta{q_2}.
$$
For probability spaces $\Theta_1$ and $\Theta_2$,
let $\ct$ be a linear operator from
$L^{q_1}(\Theta_1)$ to $L^{q_1}(\Theta_2)$
and at the same time a linear operator
from $L^{q_2}(\Theta_1)$ to $L^{q_2}(\Theta_2)$ satisfying
$$
\Vert \ct\Vert_{L^{q_1}(\Theta_1)\to L^{q_1}(\Theta_2)}\le M_1
\quad\text{and}\quad
\Vert \ct\Vert_{L^{q_2}(\Theta_1)\to L^{q_2}(\Theta_2)}\le M_2.
$$
Then $\ct$ is a linear operator from
$L^{p}(\Theta_1)$ to $L^{p}(\Theta_2)$ satisfying
$$\Vert \ct\Vert_{L^{p}(\Theta_1)\to L^{p}(\Theta_2)}\le
M_1^{1-\theta}M_2^\theta.
$$
\end{theorem}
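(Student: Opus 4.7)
The plan is to prove Riesz--Thorin by complex interpolation using Hadamard's three lines lemma on the strip $S = \{z \in \mathbb{C} : 0 \le \mathrm{Re}(z) \le 1\}$. First I would reduce to simple functions: since simple functions are dense in $L^p(\Theta_1)$ for $p < \infty$, it suffices to bound $\Vert \ct f\Vert_p$ for simple $f$ with $\Vert f\Vert_p = 1$. By $L^p$--$L^{p'}$ duality with $1/p+1/p'=1$, this norm equals the supremum of $\bigl|\int_{\Theta_2} (\ct f) g \rd\mu_2\bigr|$ over simple $g$ with $\Vert g\Vert_{p'}=1$, so the task reduces to bounding that bilinear pairing by $M_1^{1-\theta} M_2^\theta$.

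Next, writing $f = \sum_k |a_k| e^{i\alpha_k} \one_{A_k}$ and $g = \sum_j |b_j| e^{i\beta_j} \one_{B_j}$, and setting $1/p(z) = (1-z)/q_1 + z/q_2$ together with $1/p'(z) = 1 - 1/p(z)$, I would introduce the analytic families
$$f_z = \sum_k |a_k|^{p/p(z)} e^{i\alpha_k} \one_{A_k}, \qquad g_z = \sum_j |b_j|^{p'/p'(z)} e^{i\beta_j} \one_{B_j}.$$
A direct computation yields $f_\theta = f$, $g_\theta = g$, and $\Vert f_{iy}\Vert_{q_1} = \Vert f_{1+iy}\Vert_{q_2} = 1$ for every $y \in \real$, with parallel identities for $g_z$ paired against the conjugates of $q_1$ and $q_2$.

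I would then form $F(z) = \int_{\Theta_2} \ct(f_z) g_z \rd\mu_2$. Because $f_z$ and $g_z$ are finite sums of functions entire in $z$ whose moduli stay uniformly bounded on the closed strip, $F$ is entire and bounded on $S$. H\"older's inequality together with the hypothesis gives $|F(iy)| \le \Vert \ct f_{iy}\Vert_{q_1}\Vert g_{iy}\Vert_{q_1'} \le M_1$ on the left edge, and $|F(1+iy)| \le M_2$ on the right edge. Hadamard's three lines lemma then delivers $|F(\theta)| \le M_1^{1-\theta} M_2^\theta$; since $F(\theta) = \int_{\Theta_2} (\ct f) g \rd\mu_2$, taking suprema over $g$ and then $f$ and invoking density finishes the proof.

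The main obstacle I expect is designing $f_z$ and $g_z$ so that three properties hold simultaneously: correct interpolation at $z=\theta$, correct boundary $L^{q_1}$ and $L^{q_2}$ norms on $\mathrm{Re}(z)\in\{0,1\}$, and uniform boundedness of $F$ on the closed strip so that Hadamard's lemma applies directly rather than demanding a Phragm\'en--Lindel\"of-type growth estimate. Restricting to simple $f$ and $g$ at the outset is what makes all three tractable, since then $f_z$ and $g_z$ are finite linear combinations of bounded entire functions of $z$ and the boundary norm identities reduce to elementary manipulations of real exponents via $\mathrm{Re}(p/p(iy)) = p/q_1$ and $\mathrm{Re}(p/p(1+iy)) = p/q_2$.
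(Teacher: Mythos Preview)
Your argument is the standard complex-interpolation proof of Riesz--Thorin via Hadamard's three lines lemma, and it is correct as sketched; the reduction to simple functions handles boundedness of $F$ on the strip, the boundary norm identities follow from $\mathrm{Re}(p/p(iy))=p/q_1$ and $\mathrm{Re}(p/p(1+iy))=p/q_2$ as you note, and on probability spaces the inclusion $L^p\subseteq L^{q_1}$ ensures $\ct$ is already defined on all of $L^p$. The paper, however, does not give any argument: its entire proof is the single sentence ``This is a special case of Theorem 2.2(b) in \cite{benn:shar:1988}.'' So you have not so much taken a different route as supplied the textbook proof that the paper elects to cite. What you gain is self-containment; what the paper gains is brevity, since Riesz--Thorin is classical and the application (Corollary~\ref{cor:oneandtwo} and Theorem~\ref{thm:slln4rqmcallrt}) only needs the statement. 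One minor point worth flagging if you write this up fully: the three-lines argument is inherently complex-analytic, so strictly speaking it proves the bound for complex $L^p$ spaces; the real case follows by complexifying $\ct$, and the operator norm is unchanged for a real-linear operator extended $\mathbb{C}$-linearly, so the constant $M_1^{1-\theta}M_2^\theta$ survives without the factor of $2$ that appears in some formulations.
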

\begin{proof}
This is a special case of Theorem 2.2(b) in \cite{benn:shar:1988}.
\end{proof}

Because $1/p$ is a convex combination of $1/q_1$
and $1/q_2$ we must have $q_1\le p\le q_2$.
Our interest is in $q_1=1$ and $q_2=2$ and $1\le p\le 2$.
The following corollary handles that case.

\begin{corollary}\label{cor:oneandtwo}
Let $\ct$ be a linear operator from $L^1(\Theta_1)$ to $L^1(\Theta_2)$
and at the same time from $L^2(\Theta_1)$ to $L^2(\Theta_2)$ with
$$
\Vert \ct\Vert_{L^{1}(\Theta_1)\to L^{1}(\Theta_2)}\le M_1
\quad\text{and}\quad
\Vert \ct\Vert_{L^{2}(\Theta_1)\to L^{2}(\Theta_2)}\le M_2.
$$
Then for $1\le p\le 2$,
$$
\Vert \ct\Vert_{L^{p}(\Theta_1)\to L^{p}(\Theta_2)}\le M_1^{(2-p)/p}M_2^{2(p-1)/p}.
$$
\end{corollary}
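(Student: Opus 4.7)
The plan is to derive the corollary as a direct specialization of the Riesz-Thorin theorem with the choices $q_1=1$ and $q_2=2$, so the only real work is a small algebraic calculation to rewrite the interpolation exponent $\theta$ in terms of $p$.

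First I would observe that for $1\le p\le 2$ the defining relation
$$\frac1p = \frac{1-\theta}{q_1}+\frac{\theta}{q_2} = (1-\theta) + \frac{\theta}{2} = 1 - \frac{\theta}{2}$$
has a unique solution in $\theta$, namely $\theta = 2(1-1/p) = 2(p-1)/p$, and correspondingly $1-\theta = (2-p)/p$. I would then verify that this $\theta$ lies in $[0,1]$: at $p=1$ we have $\theta=0$, and at $p=2$ we have $\theta=1$, and $\theta$ is monotone in $p$ on $[1,2]$, so the hypothesis $\theta\in[0,1]$ of the Riesz-Thorin theorem is satisfied throughout the claimed range of $p$.

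With that $\theta$ in hand, the Riesz-Thorin theorem applied to $\ct$ (which is given to be a bounded linear operator simultaneously from $L^1(\Theta_1)\to L^1(\Theta_2)$ with norm at most $M_1$ and from $L^2(\Theta_1)\to L^2(\Theta_2)$ with norm at most $M_2$) immediately yields
$$\Vert\ct\Vert_{L^p(\Theta_1)\to L^p(\Theta_2)}\le M_1^{1-\theta}M_2^{\theta} = M_1^{(2-p)/p}M_2^{2(p-1)/p},$$
which is the claimed bound.

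There is no real obstacle here: the corollary is purely a bookkeeping consequence of the Riesz-Thorin theorem, with the only substantive step being the algebraic identification of $\theta$. I would just take care to state the computation of $\theta$ explicitly, and to check the endpoint cases $p=1$ and $p=2$ to confirm the bound reduces to the assumed norm bounds $M_1$ and $M_2$ respectively, which is a useful sanity check.
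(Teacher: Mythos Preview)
Your proposal is correct and matches the paper's approach exactly: the paper presents this corollary as an immediate specialization of the Riesz-Thorin theorem with $q_1=1$ and $q_2=2$, and your computation $\theta=2(p-1)/p$, $1-\theta=(2-p)/p$ is precisely the algebra that justifies the stated exponents.
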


Now we are ready to use the Riesz-Thorin theorem to get
an SLLN.  The operator $\ct$ will be the RQMC error $\ce$,
the space $\Theta_1$ will be $[0,1]^d$ under the uniform
distribution and the space $\Theta_2$ will be $[0,1]^{nd}$
under the distribution induced by the RQMC points $\bsx_1,\dots,\bsx_n$.

\begin{theorem}\label{thm:slln4rqmcallrt}
 Let $\bsx_1,\bsx_2,\dots$ be a $(t,d)$-sequence
 in base $b$, with gain coefficients no larger than $\Gamma<\infty$
 and randomized as in \cite{rtms}.
For $p>1$, let $f\in L^p[0,1]^d$ with $\int_{[0,1]^d}f(\bsx)\rd\bsx=\mu$.
Then
$$
\Pr\Bigl( \lim_{n\to\infty} \hat\mu_{n}^\rqmc = \mu\Bigr) =1.
$$
\end{theorem}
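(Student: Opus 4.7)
The plan is to combine the Riesz--Thorin bound from Corollary~\ref{cor:oneandtwo} with the two-stage Borel--Cantelli-plus-monotonicity argument already used in Theorems~\ref{thm:slln4rqmc} and~\ref{thm:slln4rqmcall}. Since $L^q[0,1]^d \subset L^p[0,1]^d$ for $q>p$, we may assume $1<p\le 2$. Decomposing $f=f_+-f_-$ with $f_\pm\in L^p[0,1]^d$ reduces to $f\ge 0$, which will be useful for the final sandwich step.

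First, I would apply Corollary~\ref{cor:oneandtwo} to the error operator $\ce$ with $\Theta_1=[0,1]^d$ under Lebesgue measure and $\Theta_2=\Omega$ under the scrambling distribution, for each sample size $n=rb^m$ with $1\le r\le R$ and $m\ge t$. The $L^1\to L^1$ bound $M_1\le 2$ was already noted. For the $L^2\to L^2$ bound, the same decomposition used in Theorem~\ref{thm:slln4rqmc} -- writing $\hat\mu^\rqmc_n=r^{-1}\sum_{j=1}^r\hat\mu_{\ell,j}$ and bounding covariances by variances -- yields $M_2\le\sqrt{r\Gamma/n}$. Interpolation then gives
\[
\e\bigl(|\hat\mu^\rqmc_n-\mu|^p\bigr)\le 2^{2-p}\,(r\Gamma/n)^{p-1}\,\Vert f\Vert_p^p.
\]

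Second, Markov's inequality at exponent $p$ converts this into
\[
\Pr\bigl(|\hat\mu^\rqmc_n-\mu|\ge\epsilon\bigr)\le \frac{C\, r^{p-1}}{\epsilon^p\, n^{p-1}}
\]
for a constant $C=C(p,\Gamma,\Vert f\Vert_p)$. Fix $R\ge 1$ and let $\cn=\cn(R)=\{rb^m\mid 1\le r\le R,\,m\ge 0\}$. Since $n^{p-1}=r^{p-1}b^{m(p-1)}$, the $r^{p-1}$ cancels, and
\[
\sum_{m\ge t}\sum_{r=1}^R\Pr\bigl(|\hat\mu^\rqmc_{rb^m}-\mu|\ge\epsilon\bigr)\le \frac{CR}{\epsilon^p}\sum_{m\ge t}b^{-m(p-1)}<\infty
\]
because $b\ge 2$ and $p>1$. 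Borel--Cantelli then delivers $\hat\mu^\rqmc_{n_\ell}\to\mu$ almost surely along the subsequence $\cn$, exactly mirroring Theorem~\ref{thm:slln4rqmc}.

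Finally, I would extend from $\cn$ to all positive integers by the monotonicity sandwich of Theorem~\ref{thm:slln4rqmcall}: under the reduction to $f\ge 0$ the partial sums $T(n)=\sum_{i=1}^n f(\bsx_i)$ are nondecreasing, so for any $k$ we can sandwich $\hat\mu^\rqmc_n$ between $(\underline n/n)\hat\mu^\rqmc_{\underline n}$ and $(\bar n/n)\hat\mu^\rqmc_{\bar n}$ with $\underline n,\bar n\in\cn(b^k)$, and both ratios tend to $1$ as $k\to\infty$. The main obstacle is really conceptual rather than technical: one needs to recognize that the trivial $L^1\to L^1$ bound together with the $L^2\to L^2$ bound from \cite{snxs} feeds Riesz--Thorin exactly the hypotheses it needs to produce $p$-th moment bounds summable over $\cn$; once that observation is made, the remainder is essentially a rerun of the two earlier proofs.
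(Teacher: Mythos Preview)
Your proposal is correct and follows essentially the same route as the paper's proof: apply Corollary~\ref{cor:oneandtwo} to the error operator with $M_1\le 2$ and $M_2\le(r\Gamma/n)^{1/2}$, derive a $p$-th moment bound summable over $\cn$, invoke Borel--Cantelli to reproduce Theorem~\ref{thm:slln4rqmc} for $p>1$, and then reuse the monotonicity sandwich of Theorem~\ref{thm:slln4rqmcall} verbatim. If anything, you are slightly more explicit than the paper in noting the restriction $m\ge t$ for the $L^2$ bound and in displaying the cancellation of $r^{p-1}$ in the sum.
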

\begin{proof}
For $p\ge2$, the conclusion follows
from Theorem~\ref{thm:slln4rqmcall} and so we assume
now that $1<p<2$.  Choose any $\epsilon>0$
and suppose that $n=rb^m$ for $1\le r\le R<\infty$ and $m\ge0$.
The error operator $\ce$ for this $n$
satisfies $\Vert\ce\Vert_{L^1} \le 2$
and $\Vert\ce\Vert_{L^2} \le (r\Gamma/n)^{1/2}$.
Taking $\ct=\ce$ in Corollary~\ref{cor:oneandtwo},
$$\sup_{\Vert f\Vert_p\le 1} \bigl( \e(|\hat\mu^\rqmc_n-\mu|^p\bigr)^{1/p}
\le 2^{(2-p)/p}\Bigl(\frac{r\Gamma}{n}\Bigr)^{(p-1)/p}$$
from which
$\e(|\hat\mu^\rqmc_n-\mu|^p)
\le 2^{2-p} (r\Gamma/n)^{p-1}$ and then
$$
\Pr( |\hat\mu_n^\rqmc-\mu|>\epsilon)
\le 2^{2-p}\epsilon^{-p} (r\Gamma)^{p-1}\Vert f\Vert_p^p n^{1-p}.
$$
This probability has a finite sum over $r=1,\dots,R$ and $m\ge0$
and so
$$\Pr\Bigl(\lim_{n\to\infty}\hat\mu^\rqmc_n=\mu\Bigr)=1$$
when the limit is over $n\in\{rb^m\mid 1\le r\le R, m\ge0\}$.
We have thus established a version of Theorem~\ref{thm:slln4rqmc}
for $p>1$ and the extension to the unrestricted limit as $n\to\infty$
uses the same argument as Theorem~\ref{thm:slln4rqmcall}.
\end{proof}

The Riesz-Thorin theorem has been previously used to
bound $p$'th moments in similar problems.
See for instance
 \cite{He94,rudolf2015error,kunsch2019solvable}.

\section{Discussion}\label{sec:discussion}

We have proved a strong law of large numbers for
scrambled digital net integration,
first for geometrically spaced sample sizes and a square
integrable integrand, then removing the geometric
spacing assumption and finally, reducing the squared
integrability condition to $\e(|f(\bsx)|^p)<\infty$
for some $p>1$.
It is interesting that this strong law for $p>1$
is obtained before an equally general weak law was found.

There are other ways to scramble digital nets and sequences.
The linear scrambles of \cite{mato:1998:2} require less
space than the nested uniform scramble.
They have the same mean squared discrepancy
as the nested uniform scramble \cite{HicYue00}
and so they might also satisfy an SLLN.
A digital shift \cite{lecu:lemi:2002,altscram} does not produce the same variance
as the nested uniform scramble and it does not
satisfy the critically important bound~\eqref{eq:gammabound}
on gain coefficients, so the methods used here would
not provide an SLLN for it.
The nested uniform scramble
is the only one for which central limit theorems have been proved
\cite{basu2017asymptotic,loh:2003}.

A second major family of RQMC methods has been constructed
from lattice rules \cite{sloa:joe:1994}.  Points $\bsa_1,\dots,\bsa_n$
on a lattice in $[0,1]^d$ are randomized into $\bsx_i = \bsa_i+\bsu \mod 1$,
for $\bsu\sim\dustd[0,1]^d$.
That is, they are shifted with wraparound
in what is known as a Cranley-Patterson rotation  \cite{cranpat76}.
Then the estimate of $\mu$ is $\hat\mu^\rlat_n = (1/n)\sum_{i=1}^nf(\bsx_i)$.
For an extensible version of shifted lattice rules, see \cite{hick:hong:lecu:lemi:2000}.
The Cranley-Patterson rotation does not provide a $\Gamma$ bound
like~\eqref{eq:gammabound} because there are functions $f\in L^2[0,1]^d$
with $\var(\hat\mu^\rlat_n)=\sigma^2(f)$ \cite{lecu:lemi:2002}, and so a proof of an SLLN
for this form of RQMC would require a different approach.
The fact that $\var(\hat\mu^\rlat_n)=\sigma^2(f)$ is possible does
not provide a counter-example to an SLLN because this equality might only
hold for a finite number of $n_\ell$ in the infinite sequence.
Given a class of functions $\cf$ with $\var(\hat\mu^\rlat_{n_\ell})\le B\sigma^2(f)/n_\ell$
for all $f\in\cf$, all $\ell\ge1$, and some $B<\infty$, we get an SLLN for $f\in\cf$
if $\sum_{\ell=1}^\infty 1/n_\ell<\infty$. Some such bounds $B$ for randomly
shifted lattices appear in \cite{lecu:lemi:2002}
though they hold for specific $n_\ell$ not necessarily an infinite sequence of them.

\section*{Acknowledgments}
We thank Max Balandat for posing the problem of finding an SLLN
for randomized QMC. Thanks also to Ektan Bakshy, Wei-Liem Loh and
Fred Hickernell for discussions.
This work was supported by grant IIS-1837931 from the U.S. National Science
Foundation.

\bibliographystyle{plain}
\bibliography{qmc}

\end{document}